\newcommand{\defterm}[1]{\textbf{#1}}
\renewcommand{\phi}{\varphi}
\newcommand{\C}{\mathbb{C}}
\DeclareMathOperator{\from}{:}
\renewcommand{\to}{\operatorname{\longrightarrow}}
\renewcommand{\mapsto}{\operatorname{\longmapsto}}
\newcommand{\curly}[1]{\left\{#1\right\}}
\newcommand{\id}{\mathrm{id}}
\newcommand{\Id}{\mathrm{Id}}
\newcommand{\ladj}{\dashv}
\newcommand{\op}{\mathrm{op}}
\newcommand{\catfont}{\mathsf}
\newcommand{\Top}{\catfont{Top}}
\newcommand{\Cat}{\catfont{Cat}}
\newcommand{\CAT}{\catfont{CAT}}
\newcommand{\SSet}{\catfont{SSet}}
\newcommand{\Set}{\catfont{Set}}
\newtheorem{thm}{Theorem}
\newtheorem{prp}[thm]{Proposition}
\newtheorem{dfn}[thm]{Definition}
\newtheorem{rem}[thm]{Remark}
\title{A Categorical Model for the Hopf Fibration}
\author{Björn Gohla}
\email[Björn Gohla]{b.gohla@gmx.de}
\address{Grupo de F\'isica Matem\'atica, Faculdade de Ci\^ encias da 
Universidade de Lisboa, Campo Grande, Edif\'icio C6, 1749-016  Lisboa, Portugal}
\thanks{BG was supported by the Portuguese Science Foundation (FCT)
  under the post-doc grant SFRH/BPD/99060/2013.}
\begin{document}

\begin{abstract}
  \noindent
  We give a description up to homeomorphism of $S^3$ and $S^2$ as
  classifying spaces of small categories, such that the Hopf map
  $S^3\to{}S^2$ is the realization of a functor.
\end{abstract}

\maketitle{}

\section{Introduction}
\noindent
\citet{zbMATH01547910} describe simplicial complices whose realizations
are $S^3$ and $S^2$ such that the Hopf map is the realization of a
simplicial map. \citet{zbMATH03457106} Introduced the classifying
space of a category in order to study algebraic $K$-theory. In
particular he defined homotopy groups of (additive) categories as
homotopy groups of their nerves.

We on the other hand were interested in seeing if one can engineer
categories whose nerves would have interesting an geometry. The first
thing one notices is that such categories should not contain
non-trivial endomorphisms or isomorphisms, since this would
immediately make the nerve very large, and arguably ungeometric.

So one is led to focus on the following class of categories and their functors:
\begin{dfn}
  A category \(A\) is called \defterm{skeletal} if for all arrows
  \(\xymatrix@1{x\ar[r]^f&y}\) in \(A\) we have that \(f\) being an
  isomorphism implies \(x=y\).
\end{dfn}
Note that this does not mean that \(f\) is an identity.
\begin{dfn}
  A category \(A\) is called \defterm{progressive} if all
  endomorphisms \(\xymatrix@1{x\ar[r]^f&x}\) in \(A\) are identities.
\end{dfn}
A category that is both skeletal and progressive is in some ways
similar to a poset. These categories were called \defterm{acyclic} and
studied by \citet{zbMATH05175082}, in particular their quotients with
respect to group actions. We will only have reason to use finite such
categories.

We would like to note that the existence of a categorical model of the
Hopf map itself is clear, since one can take the simplicial model
$\eta\from{}S^3_{12}\to{}S^2_4$ of \citet{zbMATH01547910} and consider
the posets of simplices ordered by inclusion, and the induced order
preserving map between them
$U(\eta)\from{}U(S^3_{12})\to{}U(S^2_4)$. These posets are by
implication acyclic categories. The realization has the right
homeomorphism type, but $U(S^3_{12})$ is a category with 168 objects,
$U(S^2_4)$ with 14. By comparison, the acyclic categories in our
construction have 10 and 4 objects. This seems to be a minimum.

Finally we remark, that one could define a categorical model $R$ of
$\C{}P^2$ from our model $H$ of the Hopf map by forming the pushout
\begin{equation}
  \label{eq:1}
  \begin{xy}
    *!C\xybox{
      \xymatrix{
        P\ar[d]_H\ar[r]^-{i_0}&M_{P\to{}\underline{0}}\ar[d]\\
        Q\ar[r]&R
      }
    }    
  \end{xy}
\end{equation}
where $M_{P\to\underline{0}}$ is the cone over $P$, i.e., a model of
the 4-ball. We did not pursue this avenue any further.

\section{From Categories to Spaces}
\label{sec:from-categ-spac}
\noindent
Let $\Delta_+$ be the category of finite non-empty ordinals, i.e.,
categories of the form $\underline{n}=\{0\to{}1\to\cdots\to{}n\}$, and
the order preserving maps between them, i.e., functors. $\Delta_+$ is
obviously a subcategory of the category $\Cat$ of all small categories
by means of the functor $I\from\Delta_+\to\Cat$. 

Simplicial sets are the objects of the pre-sheaf category
$\SSet=\widehat{\Delta_+}=\CAT(\Delta_+^\op,\Set)$. The nerve functor
$N\from\Cat\to\SSet$ is defined by $NC=\Cat(I\_,C)$. Typically we will
write $(NC)_i=\Cat(I\underline{i},C)$. By left Kan extension there is
a left adjoint $\tau_1\ladj{}N$; \citep[see][]{zbMATH06341429} or
\citep[see][]{zbMATH01216133}. We mention this only because this
guarantees that $N$ preserves all limits.

To be explicit, the $0$-simplices of $NC$ are the objects of $C$;
1-simplices are the arrows of $C$, $i$-simplices are strings of $i$
composable arrows. Faces are obtained by composing or dropping
elements, degeneracies result by inserting identity arrows into a string. 

There is a well-known functor, confusingly also called $\Delta$, that
goes $\widehat{\Delta_+}\to\Top$. It takes every ordinal
$\underline{n}$ to the affine $n$-simplex. Similar to the situation
with categories this defines, a functor $S\from\Top\to\SSet$ defined
as $SX=\Top(\Delta\_,X)$. Again, by Kan extension, there is a left
adjoint $|\_|\ladj{}S$ that gives the topological realization $|X|$ of
a simplicial set. Note that as a left adjoint $|\_|$ preserves all
colimits.

By combining nerve and realization one can now define the so called
classifying space functor of small categories $B=|N\_|$ introduced by
\citet{zbMATH03457106}. We shall call $BC$ the topological realization
of $C$.

The realization of an ordinal $\underline{n}$ is of course simply the
$n$-simplex $B\underline{n}$. Further simple examples can be found at
the beginning of \cref{sec:categorical-models}. A category whose
realization is $S^2$ can be found in (\cref{eq:9}).
 
\section{The Hopf Fibration}
\noindent
If we define $S^3$ as the set $\curly{(z_0,z_1)\in \C^2:
  z_0\overline{z_0}+z_1\overline{z_1}=1}$ there is a natural action
$(z_0,z_1)\mapsto(wz_0,wz_1)$ on $S^3$ by $w\in{}U(1)$. The quotient
by this action happens to be $S^2$ and the quotient map
$h\from{}S^3\to{}S^2$ is known as the Hopf map. Its homotopy class
is characterized by the fact that any two distinct fibres are simply
linked circles.

One can decompose the Hopf map by decomposing $S^2$ along the equator,
and $S^3$ along the pre-image of the equator, which is a torus. So $h$
appears as a pushout in the arrow category $\Top^\rightarrow$ of two
circle bundles over the 2-disk, see (\cref{eq:topcospan}).

\citet{zbMATH01547910} present these circle bundles as simplicial
complices. We show that instead one can describe these circle bundles
using acyclic categories and functors.

\subsection{Categorical Models}
\label{sec:categorical-models}
\noindent
We start by describing small categories that model the circle and the
disk. Let $S$ be the small category given by the following diagram:
\begin{equation}
  \label{eq:defineS}
  S=\left\{
    \begin{xy}
      *!C+\xybox{
        \xymatrix{
          A\ar@/^1pc/[r]^f\ar@/_1pc/[r]_g&B
        }
      }
    \end{xy}
  \right.
\end{equation}
with no relations. Next, the category $D$ is given by
\begin{equation}
  \label{eq:defineD}
  D=\left\{
    \begin{xy}
      *!C+\xybox{
        \xymatrix{
          A\ar@/^1pc/[r]^f\ar@/_1pc/[r]_g&B\ar[r]^t&X
        }
      }
    \end{xy}
  \right.
\end{equation}
with $tf=tg$. $S$ is obviously a full subcategory of $D$.

The central piece of our construction is the torus $T$ given the by
the diagram
\begin{equation}
  \label{eq:defineT}
  T=\left\{
    \begin{xy}
      *!C+\xybox{
        \xymatrix@-.3cm{
          A_0\ar[rr]^{f_0}\ar[ddrr]_{f_2}\ar[dd]_{p_A}&{}&B_0\ar[dr]^{p_{B_1}}&{}&A_0\ar[dd]^{p_A}\ar[ll]_{g_0}\\
          {}&{}&{}&B_1&{}\\
          A_1\ar[dr]_{f_3}&{}&B_2\ar[dl]_{p_{B_3}}\ar[ur]_{q_{B_1}}&{}&A_1\ar[ul]_{g_1}\\
          {}&B_3&{}&{}&{}\\
          A_0\ar[rr]_{f_0}\ar[uu]^{q_A}&{}&B_0\ar[ul]_{q_{B_3}}&{}&A_0\ar[uu]_{q_A}\ar[ll]^{g_0}\ar[uull]_{g_2}\\
        }
      }
    \end{xy}
  \right.
\end{equation}
with all squares commuting. There are three projection
$F_M,F_N,G\from{}T\to{}S$. Where
\begin{equation}
  \label{eq:defFM}
  \begin{aligned}
    F_M\from A_0,B_0&\mapsto{}A\\
    A_1,B_3,B_2,B_1&\mapsto{}B\\
    f_0, g_0&\mapsto\id_{Fx}\\
    f_3,p_{B_3},q_{B_1},g_1&\mapsto \id_{Fx}\\
    q_A,q_{B_3}&\mapsto{}f\\
    p_A{},p_{B_1}&\mapsto{}g
  \end{aligned}
\end{equation}
and
\begin{equation}
  \label{eq:defFN}
  \begin{aligned}
    F_N \from{}A_x&\mapsto{}A\\
    {}B_x&\mapsto{}B\\
    {}p_x,q_x&\mapsto\id_{Fx}\\
    {}f_i&\mapsto{}f\\
    {}g_i&\mapsto{}g\,.
  \end{aligned}
\end{equation}
These functors can be thought to be the vertical and the horizontal
projections of the diagram (\cref{eq:defineT}), respectively,
where the vertical and horizontal zig-zags need to be seen as
straightened in the appropriate way.

Finally, $G\from{}T\to{}S$ projects in the the top-left, bottom-right
direction. Explicitly:
\begin{equation}
  \label{eq:8}
  \begin{aligned}
    G\from{}A_0,B_2&\mapsto{}A\\
    A_1,B_3,B_0,B_1&\mapsto{}B\\
    f_3,q_{B_3},f_2,g_2,p_{B_1},g_1&\mapsto{}\id_{Fx}\\
    q_A,f_0q_{B_1}&\mapsto{}f\\
    p_A,p_{B_3},g_0&\mapsto{}g\,.
  \end{aligned}
\end{equation}

In analogy with \citet{zbMATH01547910} we define two different
presentations of the solid torus, and then identify their
boundaries. We define $M$ as the categorical mapping cylinder of
$F_M$, and $N$ as the categorical mapping cylinder of $F_N$. In
essence for a functor $F\from{}A\to{}B$ this construction introduces
an arrow $x\to{}Fx$ for each object $x$ of $A$ in a way that is
compatible with the existing arrows of $A$ and $B$.  The construction
is detailed in \cref{sec:mapp-cylind-push}.

Explicitly we get
\begin{equation}
  \label{eq:defineM}
  M=\left\{
    \begin{xy}
      *!C+\xybox{
        \xymatrix@-.3cm"T"{
          A_0\ar[rr]^{f_0}\ar[ddrr]_{f_2}\ar[dd]_{p_A}&{}&B_0\ar[dr]^{p_{B_1}}&{}&A_0\ar[dd]^{p_A}\ar[ll]_{g_0}\\
          {}&{}&{}&B_1&{}\\
          A_1\ar[dr]_{f_3}&{}&B_2\ar[dl]_{p_{B_3}}\ar[ur]_{q_{B_1}}&{}&A_1\ar[ul]_{g_1}\\
          {}&B_3&{}&{}&{}\\
          A_0\ar[rr]_{f_0}\ar[uu]^{q_A}&{}&B_0\ar[ul]_{q_{B_3}}&{}&A_0\ar[uu]_{q_A}\ar[ll]^{g_0}\ar[uull]_{g_2}\\
        }
        \POS "T1,5" +<2cm,-.7cm>
        \xymatrix"S"@ur{
          Z_0\ar[dd]\\ {}\\
          Z_1\\ {}\\
          Z_0\ar[uu]
        }
        \ar "T1,3";"S1,1"|!{"T1,5";"T5,5"**\dir{}}\hole
        \ar "T2,4";"S3,1"|!{"T1,5";"T5,5"**\dir{}}\hole
        \ar "T4,2";"S3,1"|!{"T3,3";"T5,5"**\dir{}}\hole|!{"T1,5";"T5,5"**\dir{}}\hole
        \ar "T5,3";"S5,1"
      }
    \end{xy}
  \right.
\end{equation}
and
\begin{equation}
  \label{eq:defineN}
  N=\left\{
    \begin{xy}
      *!C+\xybox{
        \xymatrix@-.3cm"T"{
          A_0\ar[rr]^{f_0}\ar[ddrr]_{f_2}\ar[dd]_{p_A}&{}&B_0\ar[dr]^{p_{B_1}}&{}&A_0\ar[dd]^{p_A}\ar[ll]_{g_0}\\
          {}&{}&{}&B_1&{}\\
          A_1\ar[dr]_{f_3}&{}&B_2\ar[dl]_{p_{B_3}}\ar[ur]_{q_{B_1}}&{}&A_1\ar[ul]_{g_1}\\
          {}&B_3&{}&{}&{}\\
          A_0\ar[rr]_{f_0}\ar[uu]^{q_A}&{}&B_0\ar[ul]_{q_{B_3}}&{}&A_0\ar[uu]_{q_A}\ar[ll]^{g_0}\ar[uull]_{g_2}\\
        }
        \POS "T1,5" +<2cm,-.7cm>
        \xymatrix"S"@ur{
          Y_0\ar[dd]\\ {}\\
          Y_1\\ {}\\
          Y_0\ar[uu]
        }
        \ar "T3,1";"S1,1"|!{"T1,1";"T3,3"**\dir{}}\hole
        |!{"T3,3";"T2,4"**\dir{}}\hole
        |!{"T2,4";"T3,5"**\dir{}}\hole
        |!{"T2,4";"S3,1"**\dir{}}\hole
        |!{"T1,5";"T5,5"**\dir{}}\hole
        \ar "T2,4";"S3,1"|!{"T1,5";"T5,5"**\dir{}}\hole
        \ar "T4,2";"S3,1"|!{"T3,3";"T5,5"**\dir{}}\hole
        |!{"T1,5";"T5,5"**\dir{}}\hole
        |!{"T3,5";"S5,1"**\dir{}}\hole
        \ar "T3,5";"S5,1"
      }
    \end{xy}
  \right.
\end{equation}
where all the squares commute.

The universal property of the strict lax pushout determines two unique
extensions of the projection $G$, $H_M\from{}M\to{}D$ and
$H_N\from{}N\to{}D$. 

These $G,H_M,H_N$ with the obvious inclusions assemble into a cospan
in the arrow category $\Cat^\rightarrow$:
\begin{equation}
  \label{eq:accatcospan}
  \begin{xy}
    *!C+\xybox{
      \xymatrix{
        {}&T\ar[r]^{}\ar[d]^{G}\ar[dl]_{}&N\ar[d]_{H_N}\ar[dr]&{}\\
        M\ar[d]_{H_M}\ar@/_1pc/[rrr]&S\ar[dl]^{}\ar[r]_{}&D\ar[dr]&P\ar[d]^H\\
        D\ar@/_1pc/[rrr]|{}&{}&{}&Q
      }
    }
  \end{xy}
\end{equation}
We define $H\from{}P\to{}Q$ as the pushout of this diagram in
$\Cat^\rightarrow$. $P$ is given by the amalgamation of the two
diagrams (\cref{eq:defineM}) and (\cref{eq:defineN}) along the
square. $Q$ is the amalgamation of two copies of (\cref{eq:defineD})
along the left-hand part; explicitly
\begin{equation}
  \label{eq:9}
  Q=\left\{
    \begin{xy}
      *!C\xybox{
        \xymatrix{
          &&Y\\
          A\ar@/^1pc/[r]^f\ar@/_1pc/[r]_{g}\ar@/^2pc/[rru]\ar@/_2pc/[rrd]&B\ar[ur]\ar[dr]&\\
          &&Z
        }
      }
    \end{xy}
  \right.\,.
\end{equation}

\subsection{Realization}
\label{sec:real-coord}
\noindent
Let $B=|N\_|$ be the topological realization functor. We apply it to
the categories described in \cref{sec:categorical-models}. It is easy
to see that we obtain $BS=S^1$, $BT=T^2$, $BD^2$, $BQ=S^2$. If we let
$T^2=([0,1]\times[0,1])/\sim$, where $(0,s)\sim(1,s)$ and
$(r,0)\sim(r,1)$, then
the realizations of the projections $H_M,H_N,G$ can be described as
$BH_M(r,s)=r$, $BH_N(r,s)=s$, $HG(r,s)=r+s$.

It is now easy to see that $BM$ is the mapping cylinder of $BF_M$, as
$BN$ is the mapping cylinder of $BF_N$.

We note that the nerve $N$ as a right adjoint can not be expected to
preserve colimits in general. Direct inspection however shows that the
pushout in (\cref{eq:accatcospan}) is preserved. $|\_|$ is a left
adjoint and preserves all colimits.

\begin{thm}
  Applying the classifying space functor $B=|N\_|$ to
  (\ref{eq:accatcospan}) gives a pushout diagram in $\Top^\rightarrow$
  \begin{equation}
    \label{eq:topcospan}
    \begin{xy}
      *!C+\xybox{\xymatrix{
          {}&T^2\ar[r]\ar[d]\ar[dl]&S^1\times{}D^2\ar[d]\ar[dr]&{}\\
          S^1\times{}D^2\ar[d]\ar@/_1pc/[rrr]&S^2\ar[dl]\ar[r]&D^2\ar[dr]&S^3\ar[d]^h\\
          D^2\ar@/_1pc/[rrr]|{}&{}&{}&S^2
        }}
    \end{xy}
  \end{equation}
  Hence $BH=h$ is the Hopf map.
\end{thm}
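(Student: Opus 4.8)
The plan is to reduce the statement to a computation of topological pushouts and then to recognise the resulting map as the Hopf fibration via the standard genus-one Heegaard splitting of $S^{3}$. First I would use that colimits in an arrow category $\mathcal{C}^{\rightarrow}$ are computed componentwise: the pushout of the span $H_{M}\leftarrow G\rightarrow H_{N}$ in $\Cat^{\rightarrow}$ has domain $P=M\sqcup_{T}N$, codomain $Q=D\sqcup_{S}D$, and structure map $H\from P\to Q$ induced by the universal property. Since $B=|N\_|$ with $|\_|$ a left adjoint (hence colimit preserving) and $N$ preserving this particular pushout by the direct inspection noted in \cref{sec:real-coord}, the functor $B$ carries (\cref{eq:accatcospan}) to a pushout square in $\Top^{\rightarrow}$. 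It then remains to identify the eight spaces and the map. The values $BT=T^{2}$, $BS=S^{1}$, $BD=D^{2}$ and $BQ=S^{2}$ are already recorded, with $S^{1}=BS$ the boundary circle of the disc $D^{2}=BD$ and $S^{2}=BQ=D^{2}\cup_{S^{1}}D^{2}$. Because $M$ and $N$ are the categorical mapping cylinders of $F_{M}$ and $F_{N}$, their realisations $BM$ and $BN$ are the topological mapping cylinders of $BF_{M},BF_{N}\from T^{2}\to S^{1}$, that is, solid tori $S^{1}\times D^{2}$.

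The crux of the first half is the identification $BP=S^{3}$. Here $BP=(S^{1}\times D^{2})\cup_{T^{2}}(S^{1}\times D^{2})$ is two solid tori glued along their common boundary torus $BT=T^{2}$. I would read off from (\cref{eq:defFM}) and (\cref{eq:defFN}) that $BF_{M}$ and $BF_{N}$ collapse complementary circle factors of $T^{2}$ --- consistently with $BH_{M}(r,s)=r$ and $BH_{N}(r,s)=s$ --- so that the meridian disc of $BM$ is bounded by one generating circle of $T^{2}$ while the meridian of $BN$ is bounded by the other. Thus the meridian of each solid torus is a longitude of the other, which is exactly the genus-one Heegaard gluing that yields $S^{3}$; this gives the corner $BP=S^{3}$ and completes the identification of (\cref{eq:topcospan}) as a pushout in $\Top^{\rightarrow}$.

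For the final assertion $BH=h$ I would make the decomposition explicit against the complex model. Write $S^{3}=\curly{(z_{0},z_{1})\in\C^{2}:|z_{0}|^{2}+|z_{1}|^{2}=1}$ and split it along the Clifford torus $|z_{0}|=|z_{1}|=1/\sqrt{2}$ into the solid tori $|z_{0}|\le|z_{1}|$ and $|z_{0}|\ge|z_{1}|$, and split $S^{2}=\C P^{1}$ into its two coordinate discs. I would fix homeomorphisms matching $BM,BN$ with these solid tori and the two copies of $BD$ with the two discs, and check that under them $BH$ becomes $h(z_{0},z_{1})=[z_{0}:z_{1}]$. On each solid torus $BH$ is the composite $BM\xrightarrow{BH_{M}}BD\hookrightarrow BQ$ onto one hemisphere, a trivial circle bundle over a contractible disc; the two trivialisations are compared over the equator $S^{1}=BS$ by the overlap map $BG(r,s)=r+s$. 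The main obstacle is precisely here: one must verify that this gluing realises a degree-one clutching function $S^{1}\to U(1)$, equivalently that two distinct fibres of $BH$ are simply linked, so that the $+$ in $r+s$ is what twists the two trivial bundles into the nontrivial Hopf bundle rather than the trivial one. Once the clutching degree is pinned to $\pm1$, the stated homotopy characterisation of the Hopf class forces $BH=h$, as claimed.
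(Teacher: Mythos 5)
Your overall strategy coincides with the paper's: realize the diagram, use that $|\_|$ preserves all colimits while $N$ happens to preserve this particular pushout, identify the corner spaces, and recognize the glued map as the Hopf fibration via the genus-one Heegaard splitting and the linking of fibres. The paper's proof is exactly this, compressed into one sentence, with the topological identifications delegated to \cref{sec:real-coord} and to the discussion at the start of the section on the Hopf fibration.

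The one place where your write-up treats as a formality what is in fact the paper's main technical content is the sentence ``Because $M$ and $N$ are the categorical mapping cylinders of $F_M$ and $F_N$, their realisations $BM$ and $BN$ are the topological mapping cylinders of $BF_M,BF_N$.'' This does not follow for free: the nerve is a right adjoint and does not preserve the strict lax pushout defining a mapping cylinder in general --- the paper's counterexample (\cref{eq:mapCylConterEx}) shows $NM_F$ acquiring a $2$-simplex $((01),a_0)$ that is absent from the simplicial mapping cylinder of $NF$. What rescues your assertion is \cref{lem:nerveMapPres}: the comparison map $k\from M_{NF}\to NM_F$ is an isomorphism provided every arrow $Fx\to y$ in the codomain is in the image of $F$, a hypothesis that must be (and is) checked for $F_M$, $F_N$ and $G$. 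Without this input the identification $BM\cong BN\cong S^1\times D^2$, and with it your entire Heegaard argument, is unsupported. The remaining point you flag --- that the overlap map $BG(r,s)=r+s$ yields a degree-$\pm1$ clutching function, so that distinct fibres of $BH$ are simply linked --- is correctly identified as what pins down the homotopy class; the paper leaves that step equally implicit, so no complaint there.
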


\begin{proof}
  $B$ preserves the mapping cylinders and pushouts since by
  \cref{lem:nerveMapPres} $N$ does so, and $|\_|$ does by being a left
  adjoint.
\end{proof}

\subsection{Mapping Cylinders and Pushouts}
\label{sec:mapp-cylind-push}

\noindent
The categorical mapping cylinder of a functor $F\from{}A\to{}B$ in
$\Cat$ is given as the strict lax pushout
\begin{equation}
  \label{eq:strLaxPoDef}
  \begin{xy}
    *!C\xybox{
      \xymatrix{
        A\ar[r]^{\Id_A}\ar[d]_{F}&A\ar[d]^{i_A}\\
        B\ar[r]_{i_B}&M_F
        \POS \ar@2 "1,2";"2,1"**\dir{}?(.4);?(.6) _{a}
      }
    }
  \end{xy}
\end{equation}
where for any 
\begin{equation}
  \label{eq:strLaxPoDef1}
  \begin{xy}
    *!C\xybox{
      \xymatrix{
        A\ar[r]^{\Id_A}\ar[d]_{F}&A\ar[d]^{H}\\
        B\ar[r]_{K}&X
        \POS \ar@2 "1,2";"2,1"**\dir{}?(.4);?(.6) _{r}
      }
    }
  \end{xy}
\end{equation}
there is a unique $G\from{}M_F\to{}X$ such that $Gi_A=H$, $Gi_B=K$ and $Ga=r$.

This means that $M_F$ has objects those of $A$ and those of $B$. The
arrows of $M_F$ are of three kinds: those in $A$, those in $B$, and
those of the form $ga_xf$ where $g\in{}B_1$, $f\in{}A_1$ (which we
shall tacitly assume for the rest of this section) and $a_x$ is the appropriate
component of the natural transformation $a$. 

Note that this is the same as the ordinary pushout
\begin{equation}
  \label{eq:strLaxPoMap}
  \begin{xy}
    *!C\xybox{
      \xymatrix{
        A\ar[d]_F\ar[r]^-{i_1}&A\times\underline{2}\ar[d]\\
        B\ar[r]&M_F
      }
    }
  \end{xy}
\end{equation}
where $\underline{2}$ is the ordinal $((01)\from{}0\to{}1)$.

The nerve $NM_F$ of $M_F$ has the following types of simplices: in
dimension 0 we have $(NM_F)_0=A_0\cup{}B_0$. In dimension 1, the
elements of $(NM_F)_1$ are of the form $f$, $g$, or $ga_xf$, where
$f\in{}A_0$, $g\in{}B_0$, $a_x\from{}x\to{}Fx$. In dimension 2
simplices are of the form $(f,f')$, $(g,g')$, $(ga_xf,f')$ or
$(g,g'a_xf)$. In dimension $n=i+j+1\geq{}3$ the simplices are of the
form $(f_1,\ldots,f_n)$, $(g_1,\ldots,g_n)$, or $(g_1,\ldots, g_{i+1}a_xf_1,\ldots,f_{j+1})$.
If $Ff_k=g_k$ for some $1\leq{}k\leq{}i+1$ we have of course
\begin{align*}
(g_1,\ldots,g_{i+1},\ldots,g_n)&=(g_1,\ldots,Ff_{i+1},\ldots,g_n)\intertext{and}
(g_1,\ldots, g_{i+1}a_xf_1,\ldots,f_{j+1})&=(g_1,\ldots, Ff_k,\ldots, g_{i+1}a_xf_1,\ldots,f_{j+1})\,.
\end{align*}

The simplicial mapping cylinder of $NF$ is of course defined as the
pushout of simplicial sets
\begin{equation}
  \label{eq:simplMapCyl}
  \begin{xy}
    *!C\xybox{
      \xymatrix{
        NA\ar[d]_{NF}\ar[r]^-{i_1}&N(A\times\underline{2})\ar[d]\\
        NB\ar[r]&M_{NF}
      }
    }
  \end{xy}\,.
\end{equation}
The 0-simplices of $M_{NF}$ are the same as those of
$NM_F$; 1-simplices are $(f,\id_0)$, $(f,(01))$, $(f,\id_1)$ or $g$
where $g=(f,\id_1)$ if $Ff=g$; 2-simplices are
$((f,\id_0),(f',\id_0))$, $(g,g')$, $((f,(01)),(f',\id_0))$ or
$((f,\id_1),(f',(01)))$.

The universal property of (\cref{eq:simplMapCyl}) induces a comparison
map $k\from{}M_{NF}\to{}NM_F$; it maps 1-simplices thus
\begin{equation}
  \label{eq:defk1}
  \begin{aligned}
    k\from{}g&\mapsto{}g\\
    (f,\id_0)&\mapsto{}f\\
    (f,\id_1)&\mapsto{}Ff\\
    (f,(01))&\mapsto{}a_xf\,.
  \end{aligned}
\end{equation}
2-simplices are mapped
\begin{equation}
  \begin{aligned}
    \label{eq:defk2}
    k\from((f,\id_0),(f',\id_0))&\mapsto(f,f')\\
    (g,g')&\mapsto(g,'g)\\
    ((f,(01)),(f',\id_0))&\mapsto(Ffa_x,f')=(a_yf,f')\\
    ((f,\id_1),(f',(01)))&\mapsto(Ff,Ff'a_x)=(Ff,a_yf')\,.\\
  \end{aligned}
\end{equation}
Higher simplices are mapped similarly
\begin{equation}
  \begin{aligned}
    \label{eq:defk3}
    k\from((f_1,\id_0),\ldots,(f_n,\id_0))&\mapsto(f_1,\ldots,f_n)\\
    (g_1,\ldots,g_n)&\mapsto(g_1,\ldots,g_n)\\
    \begin{pmatrix}
      (f_1,\id_1),\ldots,(f_i,\id_1),\\(f_{i+1},(01)),\\(f_{i+2},\id_0),\ldots,(f_n,\id_1)
    \end{pmatrix}
    &\mapsto(Ff_1,\ldots,Ff_i,Ff_{i+1}a_x,f_{i+2},\ldots,f_n)\\
    &\qquad =(Ff_1,\ldots,Ff_i,a_yf_{i+1},f_{i+2},\ldots,f_n)\,.
  \end{aligned}
\end{equation}
Note in particular that $(g,a_x)\in(NM_F)_2$ only has a pre-image
under $k$ if $g$ is in the image of $F$.

We can not always expect the nerve to preserve the structure of the
mapping cylinder: Taking for example the mapping cylinder of
$F\from{}\underline{1}\to\underline{2}$ with $F(0)=0$ we get a
category
\begin{equation}
  \label{eq:mapCylConterEx}
  M_F=\left\{
  \begin{xy}
    *!C\xybox{
      \xymatrix{
        0\ar[r]^{a_0}\ar[dr]_{(01)a_0}&F0\ar[d]^{(01)}\\
        {}&1
      }
    }
  \end{xy}\right.\,.
\end{equation}
The nerve of the mapping cylinder $NM_F$ has a 2-simplex
$((01),a_0)$, that does not exist in the simplicial mapping cylinder of
$NF$.

\begin{prp}
  \label{lem:nerveMapPres}
  The comparison map $k\from{}M_{NF}\to{}NM_F$ is an isomorphism of
  simplicial sets if the functor $F\from{}A\to{}B$ has the property
  that $g\from{}Fx\to{}y$ in $B$ implies $g=Ff$ for some $f$ in
  $A$. Hence the nerve functor $N\from\Cat\to{}\SSet$ preserves the
  mapping cylinder $M_F$.
\end{prp}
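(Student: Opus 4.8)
The plan is to show that $k\from M_{NF}\to NM_F$ is a bijection in every simplicial degree; since $\SSet$ is a presheaf category, a degreewise bijection is automatically an isomorphism of simplicial sets, and the final assertion then follows because $M_{NF}$ is \emph{by definition} the simplicial mapping cylinder \eqref{eq:simplMapCyl} of $NF$, while $NM_F$ is the nerve of the categorical one \eqref{eq:strLaxPoMap}. I would organize the count by the three types of simplices already isolated above: those living entirely in $NA$ (the level-$0$ end of the cylinder $A\times\underline{2}$), those living entirely in $NB$, and the mixed ones straddling the cut. By the explicit formulas \eqref{eq:defk1}--\eqref{eq:defk3} the map $k$ respects this trichotomy and preserves the position of the cut, so it is enough to treat each type separately.

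On the purely-$A$ and purely-$B$ simplices the claim is immediate: $k$ sends $((f_1,\dots,f_n)$ at level $0)$ to $(f_1,\dots,f_n)$ and is the identity on $(g_1,\dots,g_n)$, and the subpresheaves $NA$ and $NB$ sit inside both $M_{NF}$ and $NM_F$ by the evident inclusions that $k$ matches up. Thus $k$ is already bijective away from the mixed part.

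The heart of the argument, and the step that genuinely uses the hypothesis on $F$, is surjectivity onto the mixed simplices. A mixed $n$-simplex of $NM_F$ is a string $x_0\to\cdots\to x_n$ in $M_F$ whose objects lie in $A$ up to some cut and in $B$ thereafter; by naturality of $a$ each crossing morphism has the canonical form $h\,a_{x}$ with $h$ a morphism of $B$ out of $Fx$, so the simplex amounts to an $A$-string, a crossing $h\from Fx\to y$, and a $B$-string issuing from $y$ (compare the remark that $(g,a_x)$ lifts only when $g$ lies in the image of $F$). I would then lift the whole $B$-part to $A$ by induction along the string: the hypothesis applied to $x$ yields $f$ with $Ff=h$, whence the next vertex $y=F(\operatorname{cod}f)$ again lies in the image of $F$; reapplying the hypothesis lifts each successive $B$-arrow, and assembling these lifts with the given $A$-part produces a cylinder simplex of $M_{NF}$ that $k$ carries to the chosen mixed simplex. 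The main obstacle is precisely this propagation --- one must check that after each lift the source of the next arrow is again of the form $F(-)$, so that the hypothesis can be invoked anew; without it the lifting stalls, exactly as in the counterexample \eqref{eq:mapCylConterEx}.

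Finally, for injectivity I would run the same bookkeeping in reverse: from a mixed simplex of $NM_F$ one reads the cut off from the last object lying in $A$, recovers the level-$0$ arrows verbatim as arrows of $A$, and recovers the crossing together with the level-$1$ arrows from their images under $F$. Here one uses that the lifts produced in the previous step are unique --- equivalently that $F$ is faithful on the relevant arrows, as holds for the projections $F_M,F_N$ at hand --- so that the cylinder simplex reconstructed in the surjectivity step is the \emph{only} preimage. Combining the three cases gives a degreewise bijection, hence an isomorphism $k$, and therefore $N$ sends the pushout \eqref{eq:strLaxPoMap} to the pushout \eqref{eq:simplMapCyl}; that is, $N$ preserves the mapping cylinder $M_F$.
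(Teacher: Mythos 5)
Your overall strategy is the one the paper itself uses (its proof occupies two sentences): split the simplices of both sides into the purely-$A$, purely-$B$, and mixed types, obtain surjectivity onto the mixed simplices by lifting the $B$-arrows one at a time via the stated hypothesis, and treat injectivity separately. Your surjectivity argument is correct and is precisely the content of the paper's first sentence, written out in full; the one thing to make explicit is that the lift $f$ of $h\from Fx\to y$ must be chosen with \emph{source} $x$, so that $h\,a_x=Ff\,a_x=a_{\operatorname{cod}f}\,f$ by naturality of $a$ --- this is how the hypothesis has to be read for your propagation step to go through.

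The genuine problem is your injectivity step. You invoke uniqueness of the lifts, ``equivalently that $F$ is faithful on the relevant arrows'' --- but no such hypothesis appears in the proposition, so what you have written is not a proof of the statement as given; the paper, by contrast, simply asserts that $k$ is obviously injective. Your instinct that something more is needed is in fact sound: the existence-of-lifts condition alone does not force injectivity. Take $F$ from the category $S$ of \eqref{eq:defineS} to the ordinal $\{0\to 1\}$, sending both $f$ and $g$ to the non-identity arrow; the hypothesis holds, yet $(f,(01))$ and $(g,(01))$ are distinct $1$-simplices of $M_{NF}$ with $k(f,(01))=Ff\,a_A=Fg\,a_A=k(g,(01))$ in $NM_F$. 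So either the hypothesis must be strengthened to \emph{unique} lifting of arrows with prescribed source (making $F$ a discrete opfibration), or injectivity must be argued by some other means --- silently importing faithfulness is not an option. Moreover, your parenthetical claim that faithfulness ``holds for the projections $F_M,F_N$ at hand'' is false: $F_N$ sends the two distinct arrows $p_A,q_A\from A_0\to A_1$ of $T$ both to $\id_A$, so $(\id_{A_0},(01))$, $(p_A,(01))$ and $(q_A,(01))$ are three distinct $1$-simplices of $M_{NF_N}$ that $k$ sends to the single arrow $a_{A_0}$. Injectivity is therefore not a bookkeeping afterthought; it is exactly the point at which both your argument and the paper's are incomplete, and you should either supply the missing uniqueness hypothesis and verify it for the functors in question, or explain why the realization argument survives without it.
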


\begin{proof}
  The condition ensures that all the simplices of $NM_F$ are in the
  image of the comparison map $k$ explicitly described in
  (\cref{eq:defk1}), (\cref{eq:defk2}), and (\cref{eq:defk3}).

  Finally, $k$ is obviously injective.
\end{proof}

\begin{rem}
  \cref{lem:nerveMapPres} clearly applies to the functors $F_N,F_N,G$
  defined in \cref{sec:categorical-models}.
\end{rem}

\section{Acknowledgments}
\label{sec:aknowledgements}

\noindent
We would like to thank John Huerta for suggesting to look at the Hopf
map as an example to consider. Many thanks also are due to Benjamín
Alarcón Heredia for intense discussions.

\bibliographystyle{plainnat}
\bibliography{hopffib}

\end{document}